\makeatletter \theoremstyle{plain}
\numberwithin{equation}{section}
\numberwithin{figure}{section}
\theoremstyle{plain}
\theoremstyle{plain}
\theoremstyle{plain}
\theoremstyle{plain}
\theoremstyle{plain}
\newtheorem{theorem}{Theorem}
\theoremstyle{plain}
\newtheorem{corollary}{Corollary}
\newtheorem{lemma}{Lemma}
\newtheorem{problem}{Problem}
\newtheorem{proposition}{Proposition}
\numberwithin{equation}{section}
\begin{document}
\title[On Some Problems of Operator Theory]{On Some Problems of Operator Theory and Complex Analysis}
\author{Mubariz T. Garayev}
\address{Institute of Mathematics and Mechanics, Ministry of Science and Education
Republic of Azerbaijan, B. Vagabzade str. 9, Baku 370141, Azerbaijan.}
\curraddr{Mathematics \ Department, College of Science, King Saud University , P.O.Box
2455, Riyadh 11451, Saudi Arabia}
\email{mgarayev@ksu.edu.sa}
\subjclass[2010]{Primary 47A15; Secondary 11M06.}
\keywords{Hardy space, reproducing kernel, hyperinvariant subspace, invariant subspace,
transitive algebra problem, Riemann hypothesis.}

\begin{abstract}
In 1955, Kadison \cite{14} asked whether the analogue of the classical
Burnside's theorem of the Linear Algebra holds in the infinite dimensional
case. We use reproducing kernels method to solve the Kadison question. Namely,
we prove that any proper weakly closed subalgebra $\mathcal{A}$ of the algebra
$\mathcal{B}\left(  H\right)  $ of bounded linear operators on infinite
dimensional complex Hilbert spaces $H$ has a nontrivial invariant subspace,
i.e., $\mathcal{A}$ is a nontransitive algebra. This solves The Transitive
Algebra Problem positively, and hence Hyperinvariant Subspace Problem and
Invariant Subspace Problem are also solved positively. In this context, we
also consider the celebrated Riemann Hypothesis of the theory of meromorphic
functions and solve it in negative.

\end{abstract}
\maketitle

\section{Introduction}

Let $H$ be an infinite dimensional complex Hilbert space, and $\mathcal{B}%
\left(  H\right)  $ be a Banach algebra of all bounded linear operators on
$H.$ A closed subspace $E$ of $H$ is said to be nontrivial invariant subspace
of an operator $T$ in $\mathcal{B}\left(  H\right)  $ if $\left\{  0\right\}
\neq E\neq H$ and $TE\subset E$, i.e., for each $x\in E$, $Tx\in E.$ It is
called hyperinvariant if $AE\subset E$ for every operator $A$ in the commutant
$\left\{  T\right\}  ^{^{\prime}}=\left\{  S\in\mathcal{B}\left(  H\right)
:ST=TS\right\}  $ of operator $T.$

The following two famous questions of operator theory and functional analysis
are open \cite{CP2011, 38, 28}:

\begin{problem}
\label{P1}(Invariant Subspace Problem). Does every bounded linear operator on
a Hilbert space $H$ have a nontrivial invariant subspace?
\end{problem}

\begin{problem}
\label{P2}(Hyperinvariant Subspace Problem). Does every nonscalar bounded
linear operator on a Hilbert space $H$ have a proper hyperinvariant subspace?
\end{problem}

Despite a number of partial results in direction of solving the invariant and
hyperinvariant subspace problems, these questions have remained open.

Obviously, a scalar operator $T$, i.e., an operator of the form $T=cI$,
$c\in\mathbb{C}$, where $I$ is an identity operator on $H$, has not a
nontrivial hyperinvariant subspace since $\left\{  T\right\}  ^{\prime
}=\mathcal{B}\left(  H\right)  $ and $\mathcal{B}\left(  H\right)  $ is a
transitive algebra.

Through the paper, the term algebra will be used to mean a weakly closed
subalgebra containing the identity of the Banach algebra $\mathcal{B}\left(
H\right)  .$ $A$ subalgebra $\mathcal{A}$ of $\mathcal{B}\left(  H\right)  $
is said to be transitive if it has no nontrivial invariant subspace. The
terminology comes from the fact, which is well-known from the of famous
Lomonosov's Lemma \cite{18} (see also Arveson \cite{A1967}), that
$\mathcal{A}$ is transitive if and only if $\mathcal{A}x$ is dense in $H$ for
every nonzero $x$ in $H.$ It is well known (and easy to prove) that
$\mathcal{B}\left(  H\right)  $ is a transitive algebra. It is not known: is
$\mathcal{B}\left(  H\right)  $ the only transitive algebra?

So, the transitive algebra problem raised by Kadison in his paper \cite{14} is
the following problem.

\begin{problem}
\label{P3}(The Transitive Algebra Problem). $\mathcal{A}$ is any transitive
algebra, is $\mathcal{A}=\mathcal{B}\left(  H\right)  $?\newline
\end{problem}

In case of Banach space the invariant subspace problem has been answered in
the negative by Per Enflo in his seminal paper \cite{10}. We note that in his
recent arXiv's preprint \cite{11} Enflo asserts that he positively solves the
invariant subspace problem (see Problem \ref{P1} above) in Hilbert spaces,
while nowadays the official expert's decision about validity of the result,
apparently, absent. In any case, in the present article, in Section 2, we
positively solve a more general question, namely, we give a positive answer to
the above mentioned Problem \ref{P3}. Our result obviously implies positive
solutions of Problems \ref{P1} and \ref{P2}.

For the history, known results and some recent developments on the invariant
subspace problem and hyperinvariant subspace problem, see, for instance, the
works \cite{CP2011, B1988, 29, CG2016, 10, 23, 38, 28, 35} and their references.

In this paper, using the positive solution of invariant subspace problem in
$H^{2}$, we also disprove the Riemann Hypothesis.

\section{\textbf{The Solution of the Transitive Algebra Problem in Hilbert
Spaces}}

The classical Burnside's theorem \cite{12, 20, B1905} states that any proper
subalgebra of the algebra of operators on a finite dimensional complex vector
space has a nontrivial invariant subspace. This is a much stronger assertion
than existence of invariant subspace for single operators: a singly generated
algebra is certainly proper, since it is commutative. In 1955, Kadison
\cite{14} asked whether the analogue of Burnside's theorem holds in the
infinite dimensional case (see Problem \ref{P3}). Beginning with the
fundamental paper \cite{A1967} of Arveson, many partial results are known on
Problem \ref{P3} in Hilbert spaces see for example, Radjavi and Rosenthal
\cite{28}, Lomonosov \cite{19, 20}, Shulman \cite{32}, Kissin, Shulman and
Turovskii \cite{16}, Mustafaev \cite{22}, Turovskii \cite{36, 37} and Kissin
\cite{15}.

In the present section, we prove that the answer to Problem \ref{P3} is
affirmative (see Theorem \ref{T1} below). Our proof is based on a simple
reproducing kernels argument for operators on the classical Hardy space
$H^{2}\left(  \mathbb{D}\right)  ;$ it is classical that every infinite
dimensional separable complex Hilbert space $H$ is isometrically isomorphic to
$\ell^{2},$ and hence to $H^{2}\left(  \mathbb{D}\right)  .$

Recall that the Hardy space $H^{2}=H^{2}\left(  \mathbb{D}\right)  $ consists
of all analytic functions $f\left(  z\right)  =\underset{n=0}{\overset{\infty
}{\sum}}\widehat{f}\left(  n\right)  z^{n\text{ \ }}$on $\mathbb{D}$ with the
sequence of Taylor coefficients $\left\{  \widehat{f}\left(  n\right)
\right\}  _{n\geq0}$ in $\ell^{2}.$ Equivalently, $H^{2}$ is the space of
analytic functions $f$ with finite integral means%
\[
\left\Vert f\right\Vert _{2}=\underset{0\leq r<1}{\sup}\left(  \frac{1}{2\pi
}\underset{0}{\overset{2\pi}{\int}}\left\vert f\left(  re^{it}\right)
\right\vert ^{2}dt\right)  ^{1/2}<+\infty.
\]

It is easy to show that $\left\Vert f\right\Vert _{2}=\left(  \underset
{n=0}{\overset{\infty}{\sum}}\left\vert \widehat{f}\left(  n\right)
\right\vert ^{2}\right)  ^{1/2}.$ $H^{\infty}=H^{\infty}\left(  \mathbb{D}%
\right)  $ is the Banach space of all bounded analytic functions $f$ on
$\mathbb{D}$ such that $\left\Vert f\right\Vert _{\infty}:=\underset
{z\in\mathbb{D}}{\sup}\left\vert f\left(  z\right)  \right\vert <+\infty.$

It follows from the inequality%
\[
\left\vert f\left(  z\right)  \right\vert \leq\left\Vert f\right\Vert
_{2}\left(  1-\left\vert z\right\vert ^{2}\right)  ^{-1/2},
\]
valid for $f\in H^{2}$ and $z\in\mathbb{D}$, that $H^{2}$ is a reproducing
kernel Hilbert space on $\mathbb{D}$. The set $\left\{  z^{n}:n\geq0\right\}
$ is an orthonormal basis for $H^{2}$, and hence its reproducing kernel has
the form
\[
k_{\lambda}\left(  z\right)  =\underset{n=0}{\overset{\infty}{\sum}}%
\overline{\lambda^{n}}z^{n}=\frac{1}{1-\overline{\lambda}z}(z,\lambda
\in\mathbb{D}).
\]

The function $\widehat{k}_{\lambda}\left(  z\right)  :=\frac{k_{\lambda
}\left(  z\right)  }{\left\Vert k_{\lambda}\right\Vert _{2}}$ is called the
normalized reproducing kernel at $\lambda.$ It is easy to see that
$\widehat{k}_{\lambda}\left(  z\right)  \longrightarrow0$ weakly as
$\lambda\longrightarrow\partial\mathbb{D}$, that is the Hardy space $H^{2}$ is
a standard reproducing kernel Hilbert space in sense of Nordgren and Rosenthal
\cite{24}. Indeed, note that $\left\langle f,\widehat{k}_{\lambda
}\right\rangle =\sqrt{1-\left\vert \lambda\right\vert ^{2}}f(\lambda)$ for
$f\in H^{2},$ and this obviously approaches $0$ for $f\in H^{\infty},$ and
hence for all $f\in H^{2}$ whenever $\left\vert \lambda\right\vert
\longrightarrow1^{\text{-}}.$ (For more information about reproducing kernels,
see, Aronszajn \cite{3}).

The following simplest lemma is surprisingly decisive in the proof of our main results.

\begin{lemma}
\label{L1}Let $\mu\in\mathbb{D}$ be fixed. For any operator $T$ in
$\mathcal{B}\left(  H^{2}\right)  $ we have
\[
\underset{\lambda\longrightarrow\partial\mathbb{D}}{\lim}\left(  T\widehat
{k}_{\lambda}\right)  \left(  \mu\right)  =0.
\]

\end{lemma}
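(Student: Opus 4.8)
The plan is to reduce the claim directly to the weak convergence $\widehat{k}_\lambda \to 0$ as $\lambda \to \partial\mathbb{D}$, which was already recorded in the discussion preceding the statement. The one ingredient that does the work is the reproducing property of $H^2$: evaluation at the \emph{fixed} point $\mu$ is nothing but the inner product against the fixed kernel vector $k_\mu$, so a statement about $(T\widehat{k}_\lambda)(\mu)$ is automatically a statement about an inner product in which only one factor moves with $\lambda$.

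First I would write, for every $\lambda \in \mathbb{D}$,
\[
\left( T\widehat{k}_\lambda \right)(\mu) = \left\langle T\widehat{k}_\lambda, k_\mu \right\rangle_{H^2} = \left\langle \widehat{k}_\lambda, T^{*}k_\mu \right\rangle_{H^2},
\]
using first that $H^2$ is a reproducing kernel Hilbert space with kernel $k_\mu$ at the point $\mu$, and then the definition of the Hilbert space adjoint $T^{*}$ of the bounded operator $T$. Since $\mu$ is fixed and $T$ is a fixed bounded operator, the vector $g := T^{*}k_\mu$ is a fixed element of $H^2$, independent of $\lambda$.

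Next I would invoke the fact noted just above the statement, that the normalized reproducing kernels are weakly null: $\widehat{k}_\lambda \to 0$ weakly in $H^2$ as $|\lambda| \to 1^{-}$. Concretely $\left\langle f, \widehat{k}_\lambda \right\rangle = \sqrt{1-|\lambda|^2}\, f(\lambda) \to 0$ for every $f \in H^{\infty}$, and hence for every $f \in H^2$ by density of $H^{\infty}$ in $H^2$ together with the uniform bound $\|\widehat{k}_\lambda\|_2 = 1$. Applying this with $f = g = T^{*}k_\mu$ yields $\left\langle \widehat{k}_\lambda, T^{*}k_\mu \right\rangle \to 0$, which by the displayed identity is exactly $\lim_{\lambda \to \partial\mathbb{D}} \left( T\widehat{k}_\lambda \right)(\mu) = 0$.

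There is essentially no serious obstacle: the whole content is the interplay between the reproducing property and the already-established weak nullity of the normalized kernels. The only point worth a line of care is that the limit is taken over an arbitrary approach of $\lambda$ to $\partial\mathbb{D}$, not merely a radial one; but this is immediate from the explicit formula $\left\langle f, \widehat{k}_\lambda \right\rangle = \sqrt{1-|\lambda|^2}\, f(\lambda)$ together with the boundedness of $f \in H^{\infty}$ on all of $\mathbb{D}$, so no extra argument is needed.
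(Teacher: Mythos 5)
Your proposal is correct and follows essentially the same route as the paper: the identity $\left( T\widehat{k}_{\lambda}\right)(\mu)=\left\langle T\widehat{k}_{\lambda},k_{\mu}\right\rangle=\left\langle \widehat{k}_{\lambda},T^{*}k_{\mu}\right\rangle$ combined with the weak nullity of $\widehat{k}_{\lambda}$ as $\lambda\to\partial\mathbb{D}$ is exactly the paper's argument. Your added justification of the weak convergence (density of $H^{\infty}$ plus the uniform bound $\Vert\widehat{k}_{\lambda}\Vert_{2}=1$) merely spells out what the paper records in the discussion preceding the lemma.
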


\begin{proof}
The proof is immediate from the standardness property of $H^{2}.$ In fact,
since $\widehat{k}_{\lambda}\longrightarrow0$ weakly as $\lambda
\longrightarrow\partial\mathbb{D}$, we have:%
\[
\left(  T\widehat{k}_{\lambda}\right)  \left(  \mu\right)  =\left\langle
T\widehat{k}_{\lambda},k_{\mu}\right\rangle =\left\langle \widehat{k}%
_{\lambda},T^{\ast}k_{\mu}\right\rangle \longrightarrow0
\]
as $\lambda\longrightarrow\partial\mathbb{D}$. This proves the lemma.
\end{proof}

It can be given the following interpretation to the statement of Lemma
\ref{L1}. For any operator $T\in\mathcal{B}\left(  H^{2}\right)  $ we define
its "two-variables" Berezin symbol $\widetilde{T}_{tv}$ by the formula
\[
\widetilde{T}_{tv}\left(  \lambda,\mu\right)  :=\left\langle T\widehat
{k}_{\lambda},\widehat{k}_{\mu}\right\rangle \left(  \lambda,\mu\in
\mathbb{D}\right)  .
\]
For $\mu=\lambda,$ we have the usual Berezin symbol $\widetilde{T}$ of
operator $T:$%
\[
\widetilde{T}\left(  \lambda\right)  :=\left\langle T\widehat{k}_{\lambda
},\widehat{k}_{\lambda}\right\rangle \left(  \lambda\in\mathbb{D}\right)  ,
\]
which originally introduced by Berezin \cite{B1, B2}. It is trivial from the
Cauchy-Schwarz inequality that%
\begin{align*}
\underset{\lambda\in\mathbb{D}}{\sup}\left\vert \widetilde{T}\left(
\lambda\right)  \right\vert \text{ (Berezin number)}  &  \leq\underset
{\lambda,\mu\in\mathbb{D}}{\sup}\left\vert \widetilde{T}_{tv}\left(
\lambda,\mu\right)  \right\vert \text{ ("small" Berezin norm)}\\
&  \leq\underset{\lambda\in\mathbb{D}}{\sup}\left\Vert T\widehat{k}_{\lambda
}\right\Vert \text{ ("big" Berezin norm)}\leq\left\Vert T\right\Vert
\end{align*}
for every $T\in\mathcal{B}\left(  H^{2}\right)  .$

Since $\left\{  \widehat{k}_{\lambda}\right\}  $ weakly converges to zero
whenever $\lambda$ approaches to the boundary points $\zeta\in\partial
\mathbb{D}$, it is elementary that for any $\mu\in\mathbb{D}$ $\underset
{\lambda\longrightarrow\zeta}{\lim}\left\langle \widehat{k}_{\lambda},T^{\ast
}k_{\mu}\right\rangle =0$ if and only if $\underset{\lambda\longrightarrow
\zeta}{\lim}\left\langle \widehat{k}_{\lambda},T^{\ast}\widehat{k}_{\mu
}\right\rangle =0$ (since $\sqrt{1-\left\vert \mu\right\vert ^{2}}\neq0$ for
all $\mu\in\mathbb{D}$). This shows that the statement of Lemma 1 means that
\ $\underset{\lambda\longrightarrow\partial\mathbb{D}}{\lim}\left\langle
T\widehat{k}_{\lambda},\widehat{k}_{\mu}\right\rangle =0$ for all fixed
$\mu\in\mathbb{D}$, that is for every operator $T$ in $\mathcal{B}\left(
H^{2}\right)  $ its two-variables Berezin symbol $\widetilde{T}_{tv}\left(
\lambda,\mu\right)  $ "semi-vanishes"\ on the boundary $\partial\mathbb{D}$.

The main result of this section, is the following theorem which gives a
positive answer to the transitive algebra problem in Problem \ref{P3}, and
hence, it solves the Kadison question affirmatively.

\begin{theorem}
\label{T1}Every proper weakly closed unital subalgebra $\mathcal{A}$ of
algebra $\mathcal{B}\left(  H^{2}\right)  $ has a closed nontrivial invariant
subspace, i.e., $\mathcal{A}$ is nontransitive; equivalently, only transitive
subalgebra of $\mathcal{B}\left(  H^{2}\right)  $ is $\mathcal{B}\left(
H^{2}\right)  $ itself.
\end{theorem}

\begin{proof}
Let $\mathcal{A\subset B}\left(  H^{2}\right)  $ be a proper weakly closed
unital subalgebra. For every nonzero $g\in H^{2},$ we set
\[
E_{g}:=clos\mathcal{A}g:=clos\left\{  Ag:A\in\mathcal{A}\right\}  .
\]
Clearly, $E_{g}\neq\left\{  0\right\}  $ and $AE_{g}\subset E_{g}$ for all
$A\in\mathcal{A}$. We will prove that there exists $g_{0}\in H^{2}%
\backslash\left\{  0\right\}  $ such that $E_{g_{0}}$ is not dense in $H^{2}$,
i.e., $E_{g_{0}}\neq H^{2}$. For this aim, suppose in contrary that $E_{g}=$
$H^{2}$ for all nonzero $g\in H^{2}$. Then, in particular, $E_{\widehat
{k}_{\lambda}}=H^{2}$ for all $\lambda\in\mathbb{D}$. Let $f\in H^{2}$ be an
arbitrary nonzero function. Then for any $\varepsilon>0$ there exists
$A_{\varepsilon,\lambda,f}\in\mathcal{A}$ such that $\left\Vert
f-A_{\varepsilon,\lambda,f}\widehat{k}_{\lambda})\right\Vert _{2}%
<\varepsilon.$ Hence%
\[
\left\vert f\left(  0\right)  -(A_{\varepsilon,\lambda,f}\widehat{k}_{\lambda
})\left(  0\right)  \right\vert \leq\left\Vert f-A_{\varepsilon,\lambda
,f}\widehat{k}_{\lambda}\right\Vert _{2}<\varepsilon
\]
for every $\lambda\in\mathbb{D}$, that is
\[
\left\vert f\left(  0\right)  -(A_{\varepsilon,\lambda,f}\widehat{k}_{\lambda
})\left(  0\right)  \right\vert <\varepsilon\text{ }\left(  \lambda
\in\mathbb{D}\right)  .
\]
From this, for any $B\in\mathcal{A}$ we have
\begin{align*}
\left\vert f\left(  0\right)  \right\vert  &  <\left\vert (A_{\varepsilon
,\lambda,f}\widehat{k}_{\lambda})\left(  0\right)  \right\vert +\varepsilon\\
&  \leq\left\vert (A_{\varepsilon,\lambda,f}\widehat{k}_{\lambda})\left(
0\right)  -(B\widehat{k}_{\lambda})\left(  0\right)  \right\vert +\left\vert
(B\widehat{k}_{\lambda})\left(  0\right)  \right\vert +\varepsilon\\
&  \leq\left\Vert A_{\varepsilon,\lambda,f}\widehat{k}_{\lambda}-B\widehat
{k}_{\lambda}\right\Vert _{2}+\left\vert (B\widehat{k}_{\lambda})\left(
0\right)  \right\vert +\varepsilon\\
&  \leq\left\Vert A_{\varepsilon,\lambda,f}-B\right\Vert +\left\vert
(B\widehat{k}_{\lambda})\left(  0\right)  \right\vert +\varepsilon,
\end{align*}
hence by virtue of Lemma \ref{L1}, we deduce that for any $\varepsilon>0$
there exists $\delta_{\varepsilon}>0$ such that $\left\vert (B\widehat
{k}_{\lambda})\left(  0\right)  \right\vert <\varepsilon$ for all $\lambda
\in\mathbb{D}$ with $\left\vert \lambda-1\right\vert <\delta_{\varepsilon}$,
therefore we have that
\[
\left\vert f\left(  0\right)  \right\vert \leq\left\Vert A_{\varepsilon
,\lambda,f}-B\right\Vert +2\varepsilon
\]
for all $\lambda\in\mathbb{D}$ such that $\left\vert \lambda-1\right\vert
<\delta_{\varepsilon}.$ Consequently,%
\[
\left\vert f\left(  0\right)  \right\vert -2\varepsilon\leq\underset
{B\in\mathcal{A}}{\inf\text{ }}\text{ }\left\Vert A_{\varepsilon,\lambda
,f}-B\right\Vert =\mathrm{dist}(A_{\varepsilon,\lambda,f},\mathcal{A})=0,
\]
thus $\left\vert f\left(  0\right)  \right\vert \leq2\varepsilon$ for any
$\varepsilon>0,$ which is impossible since $f$ is arbitrary. The theorem is proven.
\end{proof}

Since $\left\{  T\right\}  ^{\prime}$ is a proper weakly closed unital algebra
of $\mathcal{B}\left(  H^{2}\right)  $ for any nonscalar operator
$T\in\mathcal{B}\left(  H^{2}\right)  ,$ the following corollary is immediate
from Theorem \ref{T1}. This solves the hyperinvariant subspace problem in
Hilbert spaces affirmatively (see Problem \ref{P2}).

\begin{corollary}
\label{C1}Every nonscalar bounded linear operator on the Hardy space $H^{2}$
has a closed nontrivial hyperinvariant subspace.
\end{corollary}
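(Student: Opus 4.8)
The plan is to obtain this statement as an immediate consequence of Theorem \ref{T1}, applied not to the operator $T$ itself but to its commutant. So first I would fix a nonscalar operator $T\in\mathcal{B}\left(H^{2}\right)$ and put
\[
\mathcal{A}:=\left\{T\right\}'=\left\{S\in\mathcal{B}\left(H^{2}\right):ST=TS\right\}.
\]
The initial step is to verify that $\mathcal{A}$ is a weakly closed unital subalgebra of $\mathcal{B}\left(H^{2}\right)$. It clearly contains $I$, is a linear subspace, and is closed under composition. Weak closedness follows because left and right multiplication by the \emph{fixed} operator $T$ are each separately continuous for the weak operator topology: if $S_{\alpha}\longrightarrow S$ (WOT) with $S_{\alpha}T=TS_{\alpha}$, then $S_{\alpha}T\longrightarrow ST$ and $TS_{\alpha}\longrightarrow TS$ weakly, whence $ST=TS$ and $S\in\mathcal{A}$.

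The second step is to check that $\mathcal{A}$ is \emph{proper}, i.e. $\mathcal{A}\neq\mathcal{B}\left(H^{2}\right)$; this is exactly where the hypothesis that $T$ is nonscalar is used. If one had $\left\{T\right\}'=\mathcal{B}\left(H^{2}\right)$, then $T$ would commute with every bounded operator on $H^{2}$, hence lie in the center of $\mathcal{B}\left(H^{2}\right)$, which is $\mathbb{C}I$ — contradicting the assumption that $T$ is nonscalar. (Concretely, commuting already with all rank-one operators forces $T=cI$ for some $c\in\mathbb{C}$.)

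Having established that $\mathcal{A}=\left\{T\right\}'$ satisfies the hypotheses of Theorem \ref{T1}, I would invoke that theorem to obtain a closed subspace $E$ with $\left\{0\right\}\neq E\neq H^{2}$ and $AE\subset E$ for every $A\in\left\{T\right\}'$. By the definition of hyperinvariance recalled in the Introduction, $E$ is then a closed nontrivial hyperinvariant subspace for $T$, which is the assertion.

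I do not expect any genuine obstacle here: all of the analytic content has already been absorbed into Lemma \ref{L1} and Theorem \ref{T1}, and what remains is bookkeeping. The only two points needing (entirely routine) care are the weak closedness of the commutant and the identification of the center of $\mathcal{B}\left(H^{2}\right)$ with the scalar operators, both of which are classical.
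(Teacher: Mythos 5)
Your proposal is correct and follows exactly the route the paper takes: the paper derives Corollary \ref{C1} immediately from Theorem \ref{T1} by observing that $\left\{T\right\}'$ is a proper weakly closed unital subalgebra of $\mathcal{B}\left(H^{2}\right)$ for any nonscalar $T$. You merely spell out the two routine verifications (weak closedness of the commutant and properness via the triviality of the center) that the paper leaves implicit.
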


Since every hyperinvariant subspace is an invariant subspace of operator
$T\in\mathcal{B}\left(  H^{2}\right)  $, the following corollary gives a
positive answer to the invariant subspace problem in $H^{2},$ and hence, in
any infinite dimensional separable complex Hilbert space $H$ (see Problem
\ref{P1}).

\begin{corollary}
\label{C2}Every bounded linear operator on the Hardy space $H^{2}$ has a
closed nontrivial invariant subspace.
\end{corollary}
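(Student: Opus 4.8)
The plan is to deduce Corollary \ref{C2} from Corollary \ref{C1} by a brief case analysis according to whether $T$ is a scalar multiple of the identity. The only point requiring attention is that Corollary \ref{C1} is phrased in terms of \emph{hyperinvariant} subspaces and is asserted only for nonscalar operators, so the scalar case has to be handled separately and by hand; the nonscalar case is then immediate from the trivial observation that $T\in\{T\}^{\prime}$.

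First I would take a nonscalar operator $T\in\mathcal{B}(H^{2})$. By Corollary \ref{C1} it has a closed nontrivial hyperinvariant subspace $E$, i.e. $AE\subset E$ for every $A\in\{T\}^{\prime}$. Since $T$ commutes with itself, $T\in\{T\}^{\prime}$, hence $TE\subset E$, and so $E$ is a closed nontrivial invariant subspace for $T$. Next I would dispose of the remaining case $T=cI$ with $c\in\mathbb{C}$: then every closed subspace $E\subset H^{2}$ satisfies $TE=cE\subset E$, so any $E$ with $\{0\}\neq E\neq H^{2}$ works — for instance the shift-invariant subspace $zH^{2}$, or any one-dimensional subspace $\mathbb{C}f$ with $f\neq 0$ — and such subspaces exist because $H^{2}$ is infinite dimensional.

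Combining the two cases gives that every bounded linear operator on $H^{2}$ has a closed nontrivial invariant subspace, which is exactly Corollary \ref{C2}. If one wishes to state the conclusion for an arbitrary infinite dimensional separable complex Hilbert space $H$, one transports it through the isometric isomorphism $H\cong H^{2}(\mathbb{D})$ recorded in Section 2, since the existence of a nontrivial invariant subspace is preserved under unitary equivalence.

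I do not anticipate any genuine obstacle here: all the substance of the invariant subspace statement has already been absorbed into Theorem \ref{T1} and Corollary \ref{C1}, and the present corollary is a formal consequence, with the scalar case being the only (entirely routine) thing not covered verbatim by Corollary \ref{C1}.
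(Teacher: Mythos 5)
Your proposal is correct and follows essentially the same route as the paper, which deduces Corollary \ref{C2} from Corollary \ref{C1} via the one-line observation that every hyperinvariant subspace is invariant (since $T\in\{T\}^{\prime}$). Your explicit treatment of the scalar case $T=cI$ is a small, entirely routine addition that the paper leaves implicit.
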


\section{\textbf{The Solution of the Riemann Hypothesis}}

This section is mainly motivated with the recent works of S. W. Noor \cite{1},
J. Manzur, W. Noor, C. F. Santos \cite{MNS2022, 2}, and A. Ghosh, Y.
Kremnizer, S. W. Noor \cite{4}. The goal of this section is to prove that the
positive solution of the invariant subspace problem (see Corollary \ref{C2} in
Section 2) in the Hardy space $H^{2}$ implies a negative answer to the
celebrated Riemann Hypothesis.

The Riemann Hypothesis says that all the nontrivial zeros of the Riemann
$\zeta-$function $\zeta\left(  z\right)  :=\underset{n=1}{\overset{\infty
}{\sum}}\frac{1}{n^{z}\text{ }}$ lie on the critical line $\operatorname{Re}%
(z)=\frac{1}{2}$ of the complex plane $\mathbb{C}$. Notice that the trivial
zeros of the $\zeta$-function are negative integers $-2k$, $k\geq1.$

Our proof of Riemann Hypothesis based on Corollary \ref{C2}, which is already
proved in Section 2, some important results of the paper \cite{2} related with
some semigroup of weighted composition operators on $H^{2}$ and Nordgren and
Rosenthal's standardness property of some subspaces in $H^{2}.$

In \cite{1}, the author defined the following subspace, namely, let
$\mathcal{N}$ denote the linear \textrm{span }(i.e., , linear hull) of the
functions%
\[
h_{k}\left(  z\right)  :=\frac{1}{1-z}\log\left(  \frac{1+z+\cdots+z^{k-1}}%
{k}\right)  ,\text{ }k\geq2,
\]
which all belong to the Hardy space $H^{2}\left(  \mathbb{D}\right)  $ and all
are outer functions see \cite[Lemma 7]{1} and \cite[Corollary 14]{MNS2022} (it
is also shown in \cite{1} that $\mathcal{N}\in\mathrm{Lat}(W),$ i.e.,
$W_{n}\mathcal{N\subset N}$ for all $n\geq2$ and Riemann Hypothesis holds if
and only if $\mathcal{N}$ is dense in $H^{2}$, i.e., $\overline{\mathcal{N}%
}=H^{2}$). Also in \cite{1}, a multiplicative semigroup of weighted
composition operators $W:=\left(  W_{n}\right)  _{n\in\mathbf{N}}$ on $H^{2}$
is introduced:%
\[
W_{n}f:=\left(  1+z+\cdots+z^{n-1}\right)  f\left(  z^{n}\right)
=\frac{1-z^{n}}{1-z}f\left(  z^{n}\right)  .
\]
Each $W_{n}$ is bounded on $H^{2}$, $W_{1}=I$ (identity operator on $H^{2}$)
and $W_{m}W_{n}=W_{mn}$ for each $m,n\geq1.$

The following is proved in \cite[Proposition 4]{2}:

\begin{proposition}
\label{O1}Each $\frac{W_{n}}{\sqrt{n}}$ for $n\geq2$ is a shift with infinite
multiplicity and hence $W_{n}^{\ast}$ is universal in the sense of Rota
\cite{6}.
\end{proposition}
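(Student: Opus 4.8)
The plan is to show directly that $V_n:=W_n/\sqrt n$ is a pure (completely non-unitary) isometry on $H^2$ whose cokernel $\ker V_n^\ast$ is infinite-dimensional; by the Wold decomposition this identifies $V_n$ with the unilateral shift of infinite multiplicity, and Rota's theorem then yields the universality of $W_n^\ast$.

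First I would verify that $V_n$ is an isometry by testing it on the orthonormal basis $\{z^j\}_{j\ge 0}$. Since
\[
W_n z^j=(1+z+\cdots+z^{n-1})z^{nj}=z^{nj}+z^{nj+1}+\cdots+z^{nj+n-1},
\]
the vectors $W_nz^j$ are supported on the pairwise disjoint blocks of indices $\{nj,nj+1,\dots,nj+n-1\}$, $j\ge0$, which together partition $\mathbb{Z}_{\ge0}$. Hence $\langle W_nz^j,W_nz^k\rangle=0$ for $j\ne k$ and $\|W_nz^j\|_2^2=n$, so (using that $W_n$ is bounded) $V_n$ carries an orthonormal basis to an orthonormal set and is therefore an isometry. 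Writing $H^2=\bigoplus_{j\ge0}B_j$ with $B_j:=\operatorname{span}\{z^{nj},\dots,z^{nj+n-1}\}$, the closed range of $V_n$ equals $\bigoplus_{j\ge0}\mathbb{C}\,v_j$ where $v_j:=z^{nj}+\cdots+z^{nj+n-1}$, so the wandering subspace $\mathcal W_n:=H^2\ominus V_nH^2=\ker V_n^\ast$ is $\bigoplus_{j\ge0}\bigl(B_j\ominus\mathbb{C}v_j\bigr)$. Each summand has dimension $n-1\ge1$ and there are infinitely many of them, so $\dim\mathcal W_n=\infty$; concretely $\{\,z^m-z^{m+1}:m\ge0,\ m\not\equiv n-1\pmod n\,\}$ is an infinite linearly independent family in $\mathcal W_n$.

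Next I would show that $V_n$ is pure, i.e. $\bigcap_{m\ge1}V_n^mH^2=\{0\}$. Here the semigroup law $W_mW_n=W_{mn}$ is decisive: it gives $W_n^m=W_{n^m}$, hence $V_n^m=W_{n^m}/\sqrt{n^m}$, and the computation above (with $n$ replaced by $n^m$) shows that $V_n^mH^2=\operatorname{ran}W_{n^m}$ consists precisely of those $f\in H^2$ whose Taylor coefficients $\widehat f(k)$ are constant on every block $[\ell n^m,(\ell+1)n^m)$, $\ell\ge0$. If $g$ lies in this intersection, then for each $m$ the coefficient $\widehat g(k)$ equals a constant $c_m$ for $0\le k<n^m$; since $\widehat g\in\ell^2$ we get $n^m|c_m|^2\le\|g\|_2^2$, so $c_m\to0$, and since every fixed $k$ eventually lies in $[0,n^m)$ we conclude $\widehat g(k)=c_m\to0$, i.e. $g=0$. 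By the Wold decomposition, a pure isometry with $\dim\ker V_n^\ast=\infty$ is unitarily equivalent to the unilateral shift of infinite multiplicity, which is the first assertion. Finally, by Rota's theorem \cite{6} the adjoint of the unilateral shift of infinite multiplicity is universal, so $V_n^\ast=W_n^\ast/\sqrt n$ is universal; since universality is unaffected by multiplication by a nonzero scalar, $W_n^\ast=\sqrt n\,V_n^\ast$ is universal as well.

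The step demanding genuine care is the purity argument: rather than attempting to iterate the block structure of $V_n$ naively, one should use the identity $W_n^m=W_{n^m}$ to present $V_n^m$ as a single weighted composition operator and then exploit the $\ell^2$-decay forced on functions that are constant on arbitrarily long blocks of coefficients. The isometry claim and the computation of the multiplicity are routine.
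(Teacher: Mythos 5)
Your proof is correct, but note that the paper does not actually prove this proposition: it imports it verbatim as Proposition 4 of the cited work of Manzur, Noor and Santos \cite{2}, so there is no in-paper argument to compare against. What you supply is a complete, self-contained verification, and it is the natural one: the block decomposition $H^2=\bigoplus_{j\geq 0}B_j$ with $B_j=\mathrm{span}\{z^{nj},\dots,z^{nj+n-1}\}$ correctly exhibits $V_n=W_n/\sqrt{n}$ as an isometry with wandering subspace $\bigoplus_{j\geq 0}(B_j\ominus\mathbb{C}v_j)$ of infinite dimension, and your purity argument is the step that genuinely needed care: using the semigroup identity $W_n^m=W_{n^m}$ to identify $\mathrm{ran}\,V_n^m$ with the (closed, since $V_n^m$ is an isometry) set of functions whose Taylor coefficients are constant on blocks of length $n^m$, and then letting the $\ell^2$ condition force those constants to vanish, is exactly right. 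The Wold decomposition then identifies $V_n$ with the unilateral shift of infinite multiplicity, and the appeal to Rota's theorem \cite{6} together with the observation that universality is stable under multiplication by a nonzero scalar finishes the claim about $W_n^{\ast}$. The one point worth making explicit if you write this up is the justification that an operator known in advance to be bounded and sending an orthonormal basis to an orthonormal system is an isometry (continuity is what lets you pass from basis vectors to general $f=\sum_j a_jz^j$); you flag this parenthetically, which suffices.
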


A proper invariant subspace $E$ for an operator $T$ is called maximal if it is
not contained in any other proper invariant subspace for $T.$ In this case
$E^{\perp}$ is a minimal invariant subspace for $T^{\ast}.$ Hence the
invariant subspace problem may be reformulated in terms of $W_{n}$ (see
\cite[Corollary 6]{2}):

\begin{proposition}
\label{O2}For any $n\geq2,$ every maximal invariant subspace for $W_{n}$ has
co-dimension one if and only if the invariant subspace problem has a positive solution.
\end{proposition}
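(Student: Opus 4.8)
The plan is to pass to adjoints and exploit the universality of $W_{n}^{*}$ recorded in Proposition \ref{O1}. Recall the elementary duality already noted above: $M\mapsto M^{\perp}$ is an order-reversing bijection between $\mathrm{Lat}(W_{n})$ and $\mathrm{Lat}(W_{n}^{*})$, under which a maximal proper invariant subspace $E$ of $W_{n}$ corresponds to a minimal nonzero invariant subspace $E^{\perp}$ of $W_{n}^{*}$, with $\operatorname{codim}E=\dim E^{\perp}$. Hence the assertion ``every maximal invariant subspace of $W_{n}$ has co-dimension one'' is equivalent to ``every minimal invariant subspace of $W_{n}^{*}$ is one-dimensional,'' and it suffices to prove that this latter property holds if and only if the invariant subspace problem has a positive solution (read, as usual, for separable infinite-dimensional Hilbert spaces; $H^{2}(\mathbb{D})$ is of this type, and the finite-dimensional case of the problem is classical).

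The implication $(\Rightarrow)$ is direct and does not even use universality: if $M$ is a minimal invariant subspace of $W_{n}^{*}$, then $W_{n}^{*}|_{M}$ has no nontrivial invariant subspace; were $\dim M\geq2$, then $W_{n}^{*}|_{M}$ would have one — an eigenvector if $\dim M<\infty$, and by the positive answer to the invariant subspace problem if $\dim M=\infty$ — contradicting minimality. Since $E\neq H^{2}$ forces $E^{\perp}\neq\{0\}$, this gives $\dim M=1$, i.e. $\operatorname{codim}E=1$.

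For $(\Leftarrow)$ I would invoke Rota's universality through the following general principle: if $U$ is universal in the sense of Rota and every minimal invariant subspace of $U$ is one-dimensional, then every $T\in\mathcal{B}(K)$ on a separable infinite-dimensional Hilbert space $K$ has a nontrivial invariant subspace. Indeed, suppose $T$ had none. By universality there are $c\neq0$ and $M\in\mathrm{Lat}(U)$ with $U|_{M}$ similar to $cT$; since a similarity carries the $U$-invariant subspaces contained in $M$ bijectively onto $\mathrm{Lat}(cT)=\mathrm{Lat}(T)=\{\{0\},K\}$, the only $U$-invariant subspaces inside $M$ are $\{0\}$ and $M$, so $M$ is a minimal invariant subspace of $U$. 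By hypothesis $\dim M=1$, hence $\dim K=1$, a contradiction. Applying this with $U=W_{n}^{*}$, which is universal by Proposition \ref{O1}, completes the equivalence.

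The step demanding the most care is $(\Leftarrow)$: one must verify that the similarity produced by universality transports $\mathrm{Lat}(T)$ faithfully onto the lattice of $U$-invariant subspaces lying in $M$ (this is exactly what forces $M$ to be minimal), and that the paper's notion of ``minimal invariant subspace'' is precisely the $\perp$-dual of ``maximal invariant subspace'' with the dimension/co-dimension bookkeeping matching. It is also worth remarking, to see the equivalence is sharp, that if the invariant subspace problem fails then the minimal $M\in\mathrm{Lat}(W_{n}^{*})$ produced above is infinite-dimensional, so $M^{\perp}$ is a maximal invariant subspace of $W_{n}$ of infinite co-dimension; in particular $W_{n}$ itself does possess proper nonzero invariant subspaces, so no degenerate case (a trivial $\mathrm{Lat}(W_{n})$) intervenes.
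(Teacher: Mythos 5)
Your proof is correct and follows exactly the route the paper indicates: the paper does not actually prove Proposition \ref{O2} but cites \cite{2} after sketching the two ingredients --- the order-reversing duality $E\mapsto E^{\perp}$ taking maximal invariant subspaces of $W_{n}$ to minimal invariant subspaces of $W_{n}^{*}$, and the Rota universality of $W_{n}^{*}$ from Proposition \ref{O1} --- and these are precisely what you elaborate. The only cosmetic slip is that your labels $(\Rightarrow)$ and $(\Leftarrow)$ are interchanged relative to the order in which the statement presents the equivalence, but both implications are established.
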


In \cite{2}, the authors notice that studying the invariant subspaces for $W$
may shed light on both the Riemann Hypothesis and the Invariant Subspace
Problem. Here, in this section, we will partially confirm this point of view.

Define the manifolds (see \cite[p. 5]{2}):%
\[
\mathcal{M}:=\mathrm{span}\left\{  h_{k}-h_{\ell}:k,\ell\geq2\right\}
\]
and
\[
\mathcal{M}_{d}:=\mathrm{span}\left\{  h_{k}-h_{\ell}:k,\ell\in d\mathbf{N}%
\right\}
\]
for $d\in\mathbf{N}$ whose closures belong to $\mathrm{Lat}(W)$ due to the
identity $W_{n}h_{k}=h_{nk}-h_{n}$ (see \cite{2}). It is clear that
$\mathcal{M}_{d}\subset\mathcal{M\subset N}$, and $d_{1}$ divides $d_{2}$ if
and only if $\mathcal{M}_{d_{2}}\subset\mathcal{M}_{d_{1}.}$ It follows that
$\left(  \mathcal{M}_{d}\right)  _{d\in\mathbf{N}}$ is a sublattice of
$\mathrm{Lat}(W)$ which is isomorphic to $\mathbf{N}$ with respect to
division, and $\mathcal{M}_{p}$ for any prime $p$ is a maximal element in
$\left(  \mathcal{M}_{d}\right)  _{d\in\mathbf{N}}.$

The following important connection between the maximality property of the
subspace $\overline{\mathcal{M}}$ and the Riemann Hypothesis is proved in
\cite[Theorem 9]{2}. Let $\vee_{n}E_{n}$ denote the smallest closed subspace
containing the sets $E_{n}.$

\begin{theorem}
\label{T2}(\cite{2}). The closure of $\mathcal{M}$ is a proper element in
$\mathrm{Lat}(W)$ and equals $\underset{d=2}{\overset{\infty}{\vee}%
}\mathcal{M}_{d}.$ The closure of $\mathcal{M}$ is maximal in $\mathrm{Lat}%
(W)$ if and only if the Riemann Hypothesis is true. In this case we have
co-$dim(\mathcal{M})=1.$
\end{theorem}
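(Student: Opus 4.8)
The plan is to reduce the whole statement to one explicit bounded linear functional on $H^2$ that separates $\overline{\mathcal{M}}$ from $H^2$, and then to invoke the criterion of \cite{1} that the Riemann Hypothesis holds if and only if $\overline{\mathcal{N}}=H^2$.

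First I would dispose of the routine structural points. From the identity $W_nh_k=h_{nk}-h_n$ one gets $W_n(h_k-h_\ell)=h_{nk}-h_{n\ell}\in\mathcal{M}$, so $W_n\mathcal{M}\subset\mathcal{M}$ and $\overline{\mathcal{M}}\in\mathrm{Lat}(W)$; also $\overline{\mathcal{M}}\neq\{0\}$, since $h_2-h_3\neq0$ (already $h_2(0)=-\log2\neq-\log3=h_3(0)$). For the identification $\overline{\mathcal{M}}=\bigvee_{d\ge2}\mathcal{M}_d$ it suffices to prove the \emph{algebraic} equality $\mathcal{M}=\sum_{d\ge2}\mathcal{M}_d$ and then pass to closures: the inclusion $\supseteq$ is clear because $d\mathbf{N}\subset\{2,3,\dots\}$ for $d\ge2$, and $\subseteq$ follows from
\[
h_k-h_\ell=(h_{\ell k}-h_\ell)-(h_{k\ell}-h_k)=W_\ell h_k-W_k h_\ell\in\mathcal{M}_\ell+\mathcal{M}_k\qquad(k,\ell\ge2).
\]

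The heart of the proof is properness of $\overline{\mathcal{M}}$, and this is the one genuinely computational place. I would introduce the bounded functional
\[
\varphi(f):=f(0)-f'(0)=\langle f,1-z\rangle\qquad(f\in H^2),\qquad\|\varphi\|\le\sqrt{2},
\]
and show that $\varphi(h_k)$ does not depend on $k$. Expanding $h_k(z)=\frac{1}{1-z}\log\bigl(\frac{1+z+\cdots+z^{k-1}}{k}\bigr)$ about $z=0$ gives $h_k(z)=-\log k+(1-\log k)z+O(z^2)$, so $\varphi(h_k)=(-\log k)-(1-\log k)=-1$ for every $k\ge2$. Hence $\varphi$ annihilates each $h_k-h_\ell$, i.e. $\mathcal{M}\subset\ker\varphi$, and therefore $\overline{\mathcal{M}}\subset\ker\varphi\subsetneq H^2$; together with $\overline{\mathcal{M}}\in\mathrm{Lat}(W)$ and $\overline{\mathcal{M}}\neq\{0\}$ this says that $\overline{\mathcal{M}}$ is a proper element of $\mathrm{Lat}(W)$. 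The same functional pins down the codimension: since $\varphi(h_k)=-1$ for all $k$, one has $\mathcal{M}=\mathcal{N}\cap\ker\varphi$, and if $f_n=\sum_j c_j h_j\in\mathcal{N}$ with $f_n\to f\in\ker\varphi$, then the corrected vectors $f_n+\varphi(f_n)h_2=\sum_j c_j(h_j-h_2)\in\mathcal{M}$ converge to $f$; thus $\overline{\mathcal{M}}=\overline{\mathcal{N}}\cap\ker\varphi$, which has codimension exactly one in $\overline{\mathcal{N}}$ because $h_2\in\overline{\mathcal{N}}$ and $\varphi(h_2)=-1\neq0$. In particular $\overline{\mathcal{N}}=\overline{\mathcal{M}}+\mathbb{C}h_2$ and $\overline{\mathcal{M}}\subsetneq\overline{\mathcal{N}}$ unconditionally.

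Finally I would feed this into the density criterion. If the Riemann Hypothesis holds, then $\overline{\mathcal{N}}=H^2$ by \cite{1}, so $\overline{\mathcal{M}}$ has codimension one in $H^2$; a closed subspace of codimension one is maximal among proper closed subspaces, hence maximal in $\mathrm{Lat}(W)$, and co-$\dim(\mathcal{M})=1$. Conversely, if the Riemann Hypothesis fails, then $\overline{\mathcal{N}}\subsetneq H^2$, and $\overline{\mathcal{N}}$ is itself a proper element of $\mathrm{Lat}(W)$ (the closure of the $W$-invariant subspace $\mathcal{N}$) strictly containing $\overline{\mathcal{M}}$, so $\overline{\mathcal{M}}$ is not maximal; this yields the asserted equivalence. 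The only non-formal ingredient is the expansion $h_k(z)=-\log k+(1-\log k)z+O(z^2)$ — equivalently, the minor miracle that $\widehat{h_k}(0)-\widehat{h_k}(1)$ is independent of $k$ — which is precisely what makes $\varphi$ available; I expect this to be the single step that really needs checking, after which everything follows formally from the criterion of \cite{1}.
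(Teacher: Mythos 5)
Your proposal is correct, and the comparison here is somewhat unusual: the paper gives no proof of this statement at all. Theorem \ref{T2} is quoted verbatim from \cite[Theorem 9]{2}, so the only internally related material is the later derivation of the inclusion $\{1-z\}\subset\overline{\mathcal{M}}^{\perp}$ from the kernel identity $\bigcap_{n\geq2}\ker W_{n}^{\ast}=\mathrm{span}\{1-z\}$, which the paper uses for Lemma \ref{L2} rather than for Theorem \ref{T2} itself. Your argument reaches that same orthogonality relation by a more elementary route: the Taylor expansion $h_{k}(z)=-\log k+(1-\log k)z+O(z^{2})$, hence $\varphi(h_{k})=\langle h_{k},1-z\rangle=-1$ for every $k\geq2$, which I have checked and is right. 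That single identity then does all the work: it gives properness via $\mathcal{M}\subset\ker\varphi$, the exact relation $\overline{\mathcal{M}}=\overline{\mathcal{N}}\cap\ker\varphi$ with $\overline{\mathcal{N}}=\overline{\mathcal{M}}\oplus\mathbb{C}h_{2}$ (your correction trick $f_{n}\mapsto f_{n}+\varphi(f_{n})h_{2}$ is the right way to pass the intersection through closures), and hence both directions of the equivalence with the Riemann Hypothesis once the B\'aez-Duarte--Noor criterion $\overline{\mathcal{N}}=H^{2}\Longleftrightarrow\mathrm{RH}$ from \cite{1} is invoked; in the converse direction, using $\overline{\mathcal{N}}$ itself as a proper element of $\mathrm{Lat}(W)$ strictly between $\overline{\mathcal{M}}$ and $H^{2}$ is clean and correct. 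The identity $h_{k}-h_{\ell}=W_{\ell}h_{k}-W_{k}h_{\ell}$ for the decomposition $\mathcal{M}=\sum_{d\geq2}\mathcal{M}_{d}$ also checks out. What your approach buys is a self-contained proof of a result the paper merely cites; what it presupposes, exactly as the paper and \cite{2} do, is the criterion of \cite{1}, which is where the genuine number theory lives.
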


It is also proved in \cite{2} that%
\[
\underset{n=k+1}{\overset{\infty}{\cap}}\ker(W_{n}^{\ast})=\underset{1\leq
n\leq k}{\mathrm{span}}\left\{  1-z^{n}\right\}  ,\text{ }\forall k\geq1.
\]
\newline In particular, for $k=1$ we have that%
\begin{equation}
\underset{n=2}{\overset{\infty}{\cap}}\ker(W_{n}^{\ast})=\left\{  1-z\right\}
.\label{1}%
\end{equation}
On the other hand, it is shown in \cite{2} that $\overline{\mathcal{M}%
}=\underset{n=2}{\overset{\infty}{\vee}}W_{n}\left(  \mathcal{N}\right)  .$
Then we have by virtue of (\ref{1}) that%
\[
\overline{\mathcal{M}}=\underset{n=2}{\overset{\infty}{\vee}}W_{n}\left(
\mathcal{N}\right)  \subset\underset{n=2}{\overset{\infty}{\vee}}I_{m}%
W_{n}=\left(  \underset{n=2}{\overset{\infty}{\cap}}\ker W_{n}^{\ast}\right)
^{1}=\left\{  1-z\right\}  ^{\perp},
\]
which implies that%
\begin{equation}
\left\{  1-z\right\}  \subset\overline{\mathcal{M}}^{^{\perp}}.\label{2}%
\end{equation}
Now it follows from (\ref{2}) that $\dim\overline{\mathcal{M}}^{^{\perp}}%
\geq1$ i.e., co$-\dim\left(  \overline{\mathcal{M}}\right)  \geq1.$ Let us
show that actually $\dim\overline{\mathcal{M}}^{\perp}>1.$ Indeed, if
$\dim\overline{\mathcal{M}}^{\perp}=1,$ then inclusion (\ref{2}) shows that
reproducing kernel $k_{\overline{\mathcal{M}}^{^{\perp}},\lambda}(z)$ of the
subspace $\overline{\mathcal{M}}^{\perp}$ has the form:%
\[
k_{\overline{\mathcal{M}}^{^{\perp}},\lambda}(z)=\frac{\left(  \overline
{1-\lambda}\right)  \left(  1-z\right)  }{2},\text{ }\lambda\in\mathbb{D},
\]
and hence%
\begin{align*}
k_{\overline{\mathcal{M}},\lambda}(z) &  =k_{\lambda}\left(  z\right)
-k_{\overline{\mathcal{M}}^{^{\perp}},\lambda}(z)=\\
&  =\frac{1}{1-\overline{\lambda}z}-\frac{\left(  \overline{1-\lambda}\right)
\left(  1-z\right)  }{2}=\\
&  =\frac{2-\left(  1-\overline{\lambda}\right)  \left(  1-z\right)  \left(
1-\overline{\lambda}z\right)  }{2\left(  1-\overline{\lambda}z\right)  },
\end{align*}
thus%
\begin{equation}
k_{\overline{\mathcal{M}},\lambda}(z)=\frac{2-\left(  1-\overline{\lambda
}\right)  \left(  1-z\right)  \left(  1-\overline{\lambda}z\right)  }{2\left(
1-\overline{\lambda}z\right)  },\text{ }\lambda\in\mathbb{D}.\label{3}%
\end{equation}
It is easy to see from (\ref{3}) that
\begin{equation}
\left\Vert k_{\overline{\mathcal{M}},\lambda}\right\Vert =\frac{\sqrt{2}}%
{2}\frac{\sqrt{\left\vert 1+\lambda\right\vert ^{2}\left(  1-\left\vert
\lambda\right\vert ^{2}\right)  +2\left\vert \lambda\right\vert ^{4}}}{\left(
1-\left\vert \lambda\right\vert ^{2}\right)  ^{1/2}}.\label{4}%
\end{equation}
In fact, we have:%
\begin{align*}
\left\Vert k_{\overline{\mathcal{M}},\lambda}\right\Vert _{2} &  =\frac{1}%
{2}\left\Vert \frac{2}{1-\overline{\lambda}z}-\left(  1-\overline{\lambda
}\right)  \left(  1-z\right)  \right\Vert _{2}=\\
&  =\frac{1}{2}\left\Vert \left[  2\underset{k=0}{\overset{\infty}{\sum}%
}\overline{\lambda}^{k}z^{k}-\left(  1-z-\overline{\lambda}+\overline{\lambda
}z\right)  \right]  \right\Vert _{2}\\
&  =\frac{1}{2}\left\Vert \left(  1+\overline{\lambda}\right)  1+\left(
\overline{\lambda}+1\right)  z+2\overline{\lambda}^{2}z^{2}+2\overline
{\lambda}^{3}z^{3}+\cdots\right\Vert _{2}\\
&  =\frac{1}{2}\left\{  2\left\vert 1+\lambda\right\vert ^{2}+4\left[
\left\vert \lambda\right\vert ^{4}+\left\vert \lambda\right\vert
^{6}+\left\vert \lambda\right\vert ^{8}+\cdots\right]  \right\}  ^{1/2}\\
&  =\frac{\sqrt{2}}{2}\left[  \left\vert 1+\lambda\right\vert ^{2}+2\left[
\left\vert \lambda\right\vert ^{4}\left(  1+\left\vert \lambda\right\vert
^{2}+\left\vert \lambda\right\vert ^{4}+\cdots\right)  \right]  \right]
^{1/2}\\
&  =\frac{\sqrt{2}}{2}\left[  \left\vert 1+\lambda\right\vert ^{2}%
+\frac{2\left\vert \lambda\right\vert ^{4}}{1-\left\vert \lambda\right\vert
^{2}}\right]  ^{1/2}=\frac{\sqrt{2}}{2}\frac{\sqrt{\left\vert 1+\lambda
\right\vert ^{2}\left(  1-\left\vert \lambda\right\vert ^{2}\right)
+2\left\vert \lambda\right\vert ^{4}}}{\sqrt{1-\left\vert \lambda\right\vert
^{2}}},
\end{align*}
hence%
\[
\left\Vert k_{\overline{\mathcal{M}},\lambda}\right\Vert _{2}=\frac{\sqrt{2}%
}{2}\frac{\sqrt{\left\vert 1+\lambda\right\vert ^{2}\left(  1-\left\vert
\lambda\right\vert ^{2}\right)  +2\left\vert \lambda\right\vert ^{4}}}%
{\sqrt{1-\left\vert \lambda\right\vert ^{2}}}.
\]
Thus%
\begin{equation}
\widehat{k}_{\overline{\mathcal{M}},\lambda}(z)=\frac{2\left(  \overline
{1-\lambda}\right)  \left(  1-z\right)  \left(  1-\overline{\lambda}z\right)
}{\sqrt{2}\left(  1-\overline{\lambda}z\right)  }\frac{\sqrt{1-\left\vert
\lambda\right\vert ^{2}}}{\sqrt{\left\vert 1+\lambda\right\vert ^{2}\left(
1-\left\vert \lambda\right\vert ^{2}\right)  +2\left\vert \lambda\right\vert
^{4}}}.\label{5}%
\end{equation}
Since $\mathcal{M}=\mathrm{span}\left\{  h_{k}-h_{\ell}:k,\ell\geq2\right\}
,$ where%
\[
h_{k}\left(  z\right)  =\frac{1}{1-z}\log\frac{1+z+\cdots+z^{k-1}}{k},\text{
}k\geq2,
\]
by using representation (\ref{5}) we have%
\begin{align}
&  \left\langle h_{k}\left(  z\right)  -h_{\ell}\left(  z\right)  ,\widehat
{k}_{\overline{\mathcal{M}},\lambda}(z)\right\rangle \nonumber\\
&  =\sqrt{2}\left(  \frac{1}{1-\lambda}\log\frac{\ell\left(  1+\lambda
+\cdots+\lambda^{k-1}\right)  }{k\left(  1+\lambda+\cdots+\lambda^{\ell
-1}\right)  }\right)  \frac{\sqrt{1-\left\vert \lambda\right\vert ^{2}}}%
{\sqrt{\left\vert 1+\lambda\right\vert ^{2}\left(  1-\left\vert \lambda
\right\vert ^{2}\right)  +2\left\vert \lambda\right\vert ^{4}}}\label{6}%
\end{align}
for all $\lambda\in\mathbb{D}$ and all $k,\ell\geq2.$ It is clear that
\[
\underset{\lambda\longrightarrow\zeta}{\lim}\frac{1}{1-\lambda}\log\frac
{\ell\left(  1+\lambda+\cdots+\lambda^{k-1}\right)  }{k\left(  1+\lambda
+\cdots+\lambda^{\ell-1}\right)  }\frac{\sqrt{1-\left\vert \lambda\right\vert
^{2}}}{\sqrt{\left\vert 1+\lambda\right\vert ^{2}\left(  1-\left\vert
\lambda\right\vert ^{2}\right)  +2\left\vert \lambda\right\vert ^{4}}}=0
\]
for all $\zeta\in\partial\mathbb{D\setminus}\left\{  1\right\}  .$ For
$\zeta=1,$ it is not difficult to see from the L'H\^{o}spital Rule that%
\[
\underset{\lambda\longrightarrow1}{\lim}\frac{1}{1-\lambda}\log\frac
{\ell\left(  1+\lambda+\cdots+\lambda^{k-1}\right)  }{k\left(  1+\lambda
+\cdots+\lambda^{\ell-1}\right)  }%
\]
exists and is a finite nonzero number, and therefore%
\begin{equation}
\underset{\lambda\longrightarrow1}{\lim}\frac{1}{1-\lambda}\log\frac
{\ell\left(  1+\lambda+\cdots+\lambda^{k-1}\right)  }{k\left(  1+\lambda
+\cdots+\lambda^{\ell-1}\right)  }\frac{\sqrt{1-\left\vert \lambda\right\vert
^{2}}}{\sqrt{\left\vert 1+\lambda\right\vert ^{2}\left(  1-\left\vert
\lambda\right\vert ^{2}\right)  +2\left\vert \lambda\right\vert ^{4}}%
}=0.\label{7}%
\end{equation}
So, it follows from (\ref{6}) and (\ref{7}) that $\left\langle f,\widehat
{k}_{\overline{\mathcal{M}},\lambda}\right\rangle \longrightarrow0$ whenever
$\lambda\longrightarrow\zeta\in\partial\mathbb{D}$ for all $f\in
\overline{\mathcal{M},}$ that means that $\overline{\mathcal{M}}$ is a
standard reproducing kernel Hilbert space.

Thus we have from (\ref{2}) and (\ref{5}) that $\left\langle
f,1-z\right\rangle =0$ for all $f\in$ $\overline{\mathcal{M}}$. Then
\[
0=\left\langle f,1-z\right\rangle =\left\langle f,1-z-\widehat{k}%
_{\overline{\mathcal{M}},\lambda}\right\rangle +\left\langle f,\widehat
{k}_{\overline{\mathcal{M}},\lambda}\right\rangle ,
\]
or equivalently
\[
\left\langle f,\widehat{k}_{\overline{\mathcal{M}},\lambda}-\left(
1-z\right)  \right\rangle =\left\langle f,\widehat{k}_{\overline{\mathcal{M}%
},\lambda}\right\rangle ,
\]
since $\left\langle f,\widehat{k}_{\overline{\mathcal{M}},\lambda
}\right\rangle \longrightarrow0$ as $\lambda\longrightarrow\zeta\in
\partial\mathbb{D},$ we decude that
\[
\underset{\lambda\longrightarrow\zeta\in\partial\mathbb{D}}{\lim}\left\langle
f,\widehat{k}_{\overline{\mathcal{M}},\lambda}-\left(  1-z\right)
\right\rangle =0,
\]
which means that $\widehat{k}_{\overline{\mathcal{M}},\lambda}\overset
{weakly}{\longrightarrow}1-z$ as $\lambda\longrightarrow\partial\mathbb{D}.$
This contradicts to $1-z\neq0.$ So, $\dim\left(  \overline{\mathcal{M}}^{\bot
}\right)  >1.$ Hence the following lemma is proven.

\begin{lemma}
\label{L2}co-$\dim\left(  \overline{\mathcal{M}}\right)  >1.$
\end{lemma}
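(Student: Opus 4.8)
The plan is to argue by contradiction. From (\ref{2}) we already have $1-z\in\overline{\mathcal{M}}^{\perp}$, hence co-$\dim(\overline{\mathcal{M}})\geq 1$, and it remains to exclude the case $\dim(\overline{\mathcal{M}}^{\perp})=1$. So assume $\dim(\overline{\mathcal{M}}^{\perp})=1$. Then $\overline{\mathcal{M}}^{\perp}$ is exactly the line spanned by $1-z$, and since $\|1-z\|_{2}^{2}=2$ its reproducing kernel is $k_{\overline{\mathcal{M}}^{\perp},\lambda}(z)=\frac{(\overline{1-\lambda})(1-z)}{2}$. Because $H^{2}=\overline{\mathcal{M}}\oplus\overline{\mathcal{M}}^{\perp}$, the reproducing kernel of $\overline{\mathcal{M}}$ is $k_{\overline{\mathcal{M}},\lambda}=k_{\lambda}-k_{\overline{\mathcal{M}}^{\perp},\lambda}$, which is the formula (\ref{3}); computing its norm gives (\ref{4}), and dividing by that norm gives the expression (\ref{5}) for the normalized reproducing kernel $\widehat{k}_{\overline{\mathcal{M}},\lambda}$ of $\overline{\mathcal{M}}$.

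The second step is to show that, under this hypothesis, $\overline{\mathcal{M}}$ is a standard reproducing kernel Hilbert space, i.e. $\widehat{k}_{\overline{\mathcal{M}},\lambda}\to 0$ weakly as $\lambda\to\partial\mathbb{D}$. Since $\|\widehat{k}_{\overline{\mathcal{M}},\lambda}\|=1$ for every $\lambda$, it suffices to verify $\langle f,\widehat{k}_{\overline{\mathcal{M}},\lambda}\rangle\to 0$ on a dense set, and $\mathcal{M}$ is the linear span of the functions $h_{k}-h_{\ell}$, $k,\ell\geq 2$. Using the reproducing property together with (\ref{5}), the pairing $\langle h_{k}-h_{\ell},\widehat{k}_{\overline{\mathcal{M}},\lambda}\rangle$ evaluates to the closed form (\ref{6}). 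For a boundary point $\zeta\in\partial\mathbb{D}\setminus\{1\}$ the vanishing factor $\sqrt{1-|\lambda|^{2}}$ forces this to $0$; for $\zeta=1$ one checks by L'H\^{o}pital's rule that $\frac{1}{1-\lambda}\log\frac{\ell(1+\lambda+\cdots+\lambda^{k-1})}{k(1+\lambda+\cdots+\lambda^{\ell-1})}$ has a finite limit, so the $\sqrt{1-|\lambda|^{2}}$ factor again kills the pairing, which is the content of (\ref{7}). Hence $\langle f,\widehat{k}_{\overline{\mathcal{M}},\lambda}\rangle\to 0$ for all $f\in\overline{\mathcal{M}}$.

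The contradiction then comes by combining this with the orthogonality $\overline{\mathcal{M}}\perp\{1-z\}$ from (\ref{2}): for every $f\in\overline{\mathcal{M}}$ we have $\langle f,1-z\rangle=0$, hence $\langle f,\widehat{k}_{\overline{\mathcal{M}},\lambda}-(1-z)\rangle=\langle f,\widehat{k}_{\overline{\mathcal{M}},\lambda}\rangle\to 0$, so that $\widehat{k}_{\overline{\mathcal{M}},\lambda}\to 1-z$ weakly as $\lambda\to\partial\mathbb{D}$. Since the same net was just shown to converge weakly to $0$ and weak limits are unique, this gives $1-z=0$ in $H^{2}$, which is absurd. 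Therefore $\dim(\overline{\mathcal{M}}^{\perp})>1$, i.e. co-$\dim(\overline{\mathcal{M}})>1$.

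I expect the technical heart of the argument to be the second step, and within it the boundary behaviour of (\ref{6}) at the single point $\zeta=1$: there the factor $\frac{1}{1-\lambda}$ and the logarithm both degenerate, and one must show their product has a finite limit so that the overall expression still vanishes against $\sqrt{1-|\lambda|^{2}}$ — this L'H\^{o}pital computation is the real content, everything else being routine manipulation of explicit kernels. One also has to be careful passing from convergence on the generators $h_{k}-h_{\ell}$ to all $f\in\overline{\mathcal{M}}$ (which uses the uniform bound $\|\widehat{k}_{\overline{\mathcal{M}},\lambda}\|=1$), and in the final step to make sure the two weak limits are genuinely being compared in the same space before concluding $1-z=0$.
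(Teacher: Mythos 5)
Your proposal follows the paper's own proof step for step, and the first two stages are sound: under the hypothesis $\dim\overline{\mathcal{M}}^{\perp}=1$ the kernel formulas (\ref{3})--(\ref{5}) are correct, and the computation (\ref{6})--(\ref{7}) does show $\langle f,\widehat{k}_{\overline{\mathcal{M}},\lambda}\rangle\to 0$ as $\lambda\to\partial\mathbb{D}$ for every $f\in\overline{\mathcal{M}}$. The genuine gap is exactly at the point you flagged but did not resolve: the concluding claim that $\widehat{k}_{\overline{\mathcal{M}},\lambda}\to 1-z$ weakly. The convergence $\langle f,\widehat{k}_{\overline{\mathcal{M}},\lambda}-(1-z)\rangle\to 0$ has been established only for $f\in\overline{\mathcal{M}}$, and for such $f$ it is vacuous: since $1-z\in\overline{\mathcal{M}}^{\perp}$, the term $\langle f,1-z\rangle$ vanishes identically, so the statement is literally a restatement of $\langle f,\widehat{k}_{\overline{\mathcal{M}},\lambda}\rangle\to 0$ and carries no information about $1-z$ whatsoever. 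Weak convergence to $1-z$ in $H^{2}$ would require testing against every $f\in H^{2}$, and it fails already for $f=1-z$ itself:
\[
\left\langle 1-z,\ \widehat{k}_{\overline{\mathcal{M}},\lambda}-(1-z)\right\rangle
=0-\left\Vert 1-z\right\Vert _{2}^{2}=-2\quad\text{for every }\lambda\in\mathbb{D}.
\]
Uniqueness of weak limits cannot be invoked when the test vectors range only over a proper closed subspace: the vectors $0$ and $1-z$ induce the \emph{same} (zero) functional on $\overline{\mathcal{M}}$, which is precisely why no contradiction with $1-z\neq 0$ can be extracted this way.

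Consequently the argument proves only that $\overline{\mathcal{M}}$ would be a standard reproducing kernel space under the hypothesis $\dim\overline{\mathcal{M}}^{\perp}=1$ --- a conclusion entirely consistent with that hypothesis --- and so the case co-$\dim(\overline{\mathcal{M}})=1$ is never excluded. This defect is not yours alone; it is present verbatim in the paper's own proof of Lemma \ref{L2}, so you have faithfully reproduced an argument that does not close. To establish the lemma one would need an input that genuinely distinguishes $\overline{\mathcal{M}}^{\perp}$ from $\mathrm{span}\{1-z\}$, for instance by exhibiting a second element of $\overline{\mathcal{M}}^{\perp}$ linearly independent of $1-z$; neither your proposal nor the paper supplies such an input.
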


Now we are ready to state the main result of the present section, which
disproves the Riemann Hypothesis.

\begin{theorem}
\label{T3}The Riemann Hypothesis is not true.
\end{theorem}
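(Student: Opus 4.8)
The plan is a short proof by contradiction that feeds the codimension estimate of Lemma \ref{L2} into the Riemann-Hypothesis dichotomy of Theorem \ref{T2}. I would assume, for contradiction, that the Riemann Hypothesis holds. Then, by Theorem \ref{T2} (quoted from \cite{2}), the assertion ``RH is true'' is equivalent to ``$\overline{\mathcal{M}}$ is a maximal element of $\mathrm{Lat}(W)$'', and in that case $\overline{\mathcal{M}}$ has co-dimension one in $H^{2}$; so under the assumption co-$\dim(\overline{\mathcal{M}})=1$. But Lemma \ref{L2}, already established above, gives co-$\dim(\overline{\mathcal{M}})>1$. These two conclusions are incompatible, so the assumption is untenable and the Riemann Hypothesis must be false.

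It is worth recording how this fits the pipeline advertised at the start of the section, ``the positive solution of ISP forces RH to fail.'' Corollary \ref{C2} gives an affirmative answer to the invariant subspace problem on $H^{2}$; by Proposition \ref{O2} this is precisely the condition under which every maximal invariant subspace of each $W_{n}$ ($n\geq2$) has co-dimension one, and, combined with the maximality clause of Theorem \ref{T2} --- which asserts that $\overline{\mathcal{M}}$ is maximal in $\mathrm{Lat}(W)$ once RH is assumed --- this forces co-$\dim(\overline{\mathcal{M}})=1$ under RH. Lemma \ref{L2}, whose proof rests on the standardness of $\overline{\mathcal{M}}$ together with identity (\ref{1}) and the inclusion (\ref{2}), contradicts exactly this. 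So the genuinely new ingredient here is Lemma \ref{L2}; everything else is assembled from \cite{1,2} and from Section 2.

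At the level of Theorem \ref{T3} itself there is essentially no obstacle: once Lemma \ref{L2} is in hand the deduction is a single line. The weight of the argument lies upstream, in two places. The first is the reproducing-kernel computation behind Lemma \ref{L2} --- formulas (\ref{3})--(\ref{5}), the norm identity (\ref{4}), the inner products (\ref{6}), and the L'H\^{o}spital limit (\ref{7}) at $\zeta=1$ --- which has to be checked carefully, because the contradiction hinges on the \emph{strict} inequality co-$\dim(\overline{\mathcal{M}})>1$ and not merely on co-$\dim(\overline{\mathcal{M}})\geq1$ (the latter being already visible from (\ref{2})); concretely, one must be certain that a hypothetical one-dimensional orthogonal complement really would force $\widehat{k}_{\overline{\mathcal{M}},\lambda}\to 1-z$ weakly as $\lambda\to\partial\mathbb{D}$, which is the impossibility being exploited. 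The second, and the point on which the whole chain ultimately depends, is the validity of Corollary \ref{C2}, i.e.\ of Theorem \ref{T1} of Section 2. Granting those inputs, Theorem \ref{T3} follows immediately.
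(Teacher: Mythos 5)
Your argument is correct and matches the paper's own proof: both derive the contradiction between Lemma \ref{L2} (co-$\dim(\overline{\mathcal{M}})>1$) and the codimension-one conclusion forced under RH via Corollary \ref{C2}, Proposition \ref{O2} and the maximality clause of Theorem \ref{T2}. The only cosmetic difference is that the paper argues ``$\overline{\mathcal{M}}$ cannot be a maximal invariant subspace, hence RH fails by Theorem \ref{T2}'' rather than writing an explicit reductio, and you correctly locate all the real work upstream in Lemma \ref{L2} and Corollary \ref{C2}.
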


\begin{proof}
It follows from Corollary \ref{C2} in Section 2 and Proposition \ref{O2} that
every maximal invariant subspace of operators $W_{n},$ $n\geq2,$ has
co-dimension one. However, according to Lemma \ref{L2}, co-$dim(\overline
{\mathcal{M}})>1$, and hence the subspace $\overline{\mathcal{M}}$ can not be
a maximal invariant subspace for $W_{n},$ $n\geq2,$ thus by virtue of Manzur,
Noor and Santos theorem (see Theorem \ref{T2}), we deduce that the Riemann
Hypothesis is not true. The theorem is proven.
\end{proof}

\end{document}